\documentclass[oneside,english]{article}

\usepackage{graphicx} 
\usepackage[T1]{fontenc}
\usepackage{amsmath}
\usepackage{amsthm}
\usepackage{amssymb}
\usepackage[all]{xy}
\usepackage{comment}

\usepackage{color}
\usepackage[nocompress]{cite}
\usepackage{tikz-cd}
\usepackage{mathtools}

\makeatletter
\newcommand{\nc}{\newcommand}
\theoremstyle{plain} 
\newtheorem{thm}{Theorem}
\nc{\bthm}{\begin{thm}} \nc{\ethm}{\end{thm}}
\newtheorem{prop}[thm]{Proposition}
\nc{\bprp}{\begin{prop}} \nc{\eprp}{\end{prop}}

\newtheorem{fact}[thm]{Fact}
\nc{\bfct}{\begin{fact}} \nc{\efct}{\end{fact}}
\newtheorem{prob}[thm]{Problem}
\nc{\bprb}{\begin{prob}} \nc{\eprb}{\end{prob}}
\newtheorem{lem}[thm]{Lemma}
\nc{\blem}{\begin{lem}} \nc{\elem}{\end{lem}}
\newtheorem{claim}[thm]{Claim}
\nc{\bclm}{\begin{claim}} \nc{\eclm}{\end{claim}}
\newtheorem{cor}[thm]{Corollary}
\nc{\bcor}{\begin{cor}} \nc{\ecor}{\end{cor}}
\newtheorem{conj}[thm]{Conjecture}
\nc{\bcnj}{\begin{conj}} \nc{\ecnj}{\end{conj}}
\theoremstyle{definition}
\newtheorem{defn}[thm]{Definition}
\nc{\bdfn}{\begin{defn}} \nc{\edfn}{\end{defn}}
\newtheorem{observation}[thm]{Observation}
\nc{\bobs}{\begin{observation}} \nc{\eobs}{\end{observation}}
\theoremstyle{remark}
\newtheorem{rem}[thm]{Remark}
\nc{\brem}{\begin{rem}} \nc{\erem}{\end{rem}}
\newtheorem{cnv}[thm]{Convention}
\nc{\bcnv}{\begin{cnv}} \nc{\ecnv}{\end{cnv}}
\newtheorem{exam}[thm]{Example}
\nc{\bexm}{\begin{exam}} \nc{\eexm}{\end{exam}}

\nc{\bpf}{\begin{proof}} \nc{\epf}{\end{proof}}
\nc{\be}{\begin{enumerate}}
	\nc{\ee}{\end{enumerate}}
\nc{\bi}{\begin{itemize}}
	\nc{\itm}{\item}
	\nc{\ei}{\end{itemize}}
\nc{\invlim}{\lim_{\leftarrow}}
\nc{\dirlim}{\lim_{\rightarrow}}
\nc{\mm}{\mathbf{m}}
\nc{\nn}{\mathbf{n}}
\nc{\kk}{\mathbf{k}}
\nc{\FF}{\mathcal{F}}
\nc{\CC}{\mathcal{C}}
\nc{\Span}{\operatorname{span}}
\nc{\Img}{\operatorname{Im}}
\nc{\rank}{\operatorname{rank}}
\nc{\proj}{\operatorname{proj}}
\nc{\F}{\mathbb{F}}
\nc{\Z}{\mathbb{Z}}
\nc{\Q}{\mathbb{Q}}
\nc{\Br}{\operatorname{Br}}
\newcommand\blfootnote[1]{
	\begingroup
	\renewcommand\thefootnote{}\footnote{#1}
	\addtocounter{footnote}{-1}
	\endgroup
}

\title{ Infinite free pro-$p$ products as absolute Galois groups}

\author{Tamar Bar-On}

\date{}
\begin{document}
	\maketitle
	\blfootnote{2020 Mathematics Subject Classification. Primary 20E18, Secondary 12F12.
		
		Key words and phrases. Absolute Galois groups. Free pro-$\mathcal{C}$ products.
		
		This work was partially supported by the Israel Science Foundation (grant No. 569/21) and  the BGU Center for Advanced Studies in Mathematics.}
	\begin{abstract}
		In this paper we prove a generalized version of Mel'nikov's Criterion, which has also been proved by Haran, and use it in order to connect the realization as absolute Galois groups of free profinite and free pro-$p$ products of a sheaf of pro-$p$ groups. 
	\end{abstract}
	\section*{Introduction}
	One of the main tasks in Galois theory these days is determining which profinite groups can be realized as absolute Galois groups. 
	
	In \cite{jarden1996infinite} Moshe Jarden suggested the following question: Let $n$ be a positive integer. Is the free profinite product of $n$ absolute Galois groups an absolute Galois group as well? This question has been answered in the affirmative in several papers, such as \cite{ershov1997free, ershov2001free} and \cite{haran2000free}. Moreover, it has been shown (see, for example, \cite[Theorem 3.4]{haran2000free}) that if $K_1,...,K_n$ all have characteristic $\ell$, then there is a field $K$ of characteristic $\ell$ such that $G_{K_1}\amalg\cdots \amalg G_{K_n}\cong G_K$. 
	
	In \cite[Remark 3.5]{haran2000free} the authors showed that if $G_1,...,G_n$ are pro-$p$ groups, then there exists an embedding  $G_1\amalg^p\cdots \amalg^p G_n\hookrightarrow  G_1\amalg\cdots \amalg G_n$, where $A\amalg^pB$ stands for the free pro-$p$ product of the pro-$p$ groups $A,B$. As an immediate consequence, they concluded that if $G_1,...,G_n$ are pro-$p$ groups that can be realized as absolute Galois groups, then so can their free pro-$p$ product. This result has been previously proved by Efrat and Haran \cite[Lemma 1.3]{efrat1994galois}.
	
	A natural follow-up question arises regarding general free pro-$\mathcal{C}$ products.
	
	Let $\mathcal{C}$ be an extension-closed variety of finite groups, i.e., a family of finite groups which is closed under taking quotients, subgroups and extensions. In \cite{Koenigsmann2005Products} Koenigsmann showed that in the context of absolute Galois groups, the only varieties one should consider for free pro-$\mathcal{C}$ products are the varieties of all finite groups and of finite $p$-groups. More precisely, he proved the following: Let $\mathcal{C}$ be a class of finite groups closed under forming subgroups, quotients, and extensions, and assume that the class of pro-$\mathcal{C}$ absolute Galois groups is closed under free pro-$\mathcal{C}$ products of a finite number of groups. Then $\mathcal{C}$ is either the class of all finite groups or, for some prime $p$, the class of all finite $p$-groups.

	A natural extension of Jarden's problem asks whether the free profinite product of \textit{infinitely} many absolute Galois groups can be realized as an absolute Galois group as well. When talking about free pro-$\mathcal{C}$ products of infinitely many profinite groups, we always mean free pro-$\mathcal{C}$ products \textit{over sheaves}, which are the natural generalization of the free pro-$\mathcal{C}$ product of finitely many groups, as defined below:

	Let $T$ be a profinite space and let $\mathcal{G}$ be a profinite space equipped with a continuous surjection $\pi:\mathcal{G}\to T$ such that for every $t\in T$, $\pi^{-1}(t)$ is a pro-$\mathcal{C}$ group, and the map $(x,y)\mapsto xy^{-1}$ is continuous with respect to the subspace topology on $\{(x,y):\pi(x)=\pi(y)\}$.  Such a triple $(\mathcal{G},T,\pi)$, which sometimes will be referred to as $\mathcal{G}$ when the other data are clear from the context, is called a \textbf{sheaf of pro-$\mathcal{C}$ groups}. We also use the notation $G_t$ for $\pi^{-1}(t)$. A morphism $\mathcal{G}\to A$ from a sheaf of pro-$\mathcal{C}$ groups to a pro-$\mathcal{C}$ group $A$ is a continuous map whose restriction to each $G_t$ is a homomorphism. The free pro-$\mathcal{C}$ product $G$ of a sheaf $\mathcal{G}$ is a pro-$\mathcal{C}$ group $G$, equipped with a morphism $\omega:\mathcal{G}\to G$ such that for every morphism $\beta:\mathcal{G}\to A$, where $A$ is a pro-$\mathcal{C}$ group, there exists a unique continuous homomorphism $\bar{\beta}:G\to A$ which satisfies $\bar{\beta}\circ\omega=\beta$. The free pro-$\mathcal{C}$ product of the sheaf $\mathcal{G}$ will be denoted by $\coprod^{\mathcal{C}}\mathcal{G}$, or, when the sheaf is clear from the context, by $\coprod_T^{\mathcal{C}}G_t$. If $\mathcal{C}$ is the variety of all finite groups, then we omit the $\mathcal{C}$ from the notation and simply write $\coprod\mathcal{G}$ ($\coprod_TG_t$). If $\mathcal{C}$ is the variety of all finite $p$-groups, we write $\coprod^p\mathcal{G}$. For an extensive discussion about free pro-$\mathcal{C}$ products of sheaves, as well as an internal point of view, the reader is referred to \cite[Chapter 5]{ribes2017profinite}.

	It turns out that the general version of Jarden's question to free profinite products of infinitely many absolute Galois groups has a negative answer. More precisely, in \cite{mel1999free} Mel'nikov gives an example of a sheaf consisting of profinite groups which can be realized as absolute Galois groups, whose free profinite product cannot be realized as an absolute Galois group. More counterexamples, as well as positive examples, appear in \cite{jarden2025infinite}. 
	
	In \cite{mel1999free}, Mel'nikov also gives the following criterion for the free profinite product of a sheaf of profinite groups to be an absolute Galois group.
	\begin{thm}[Mel'nikov's Criterion]\label{Mel'nikov's Criterion}
		Let $T$ be a separable profinite space and $\mathcal{G}=\bigcup_T G_t$ a sheaf of profinite groups defined over $T$, and let $\ell$ be $0$ or a prime number. Then $\coprod_TG_t$ is an absolute Galois group of a field of characteristic $\ell$ if and only if there exists a morphism $\varphi:\mathcal{G}\to G_F$ for some field $F$ of characteristic $\ell$, such that $G_F$ is a separable profinite group and for every $t\in T$ the restriction of $\varphi$ to $G_t$ is injective.
	\end{thm}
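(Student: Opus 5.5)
One direction is immediate. If $\coprod_T G_t\cong G_K$ for a field $K$ of characteristic $\ell$, take $F=K$ and $\varphi=\omega$. Two facts need checking. First, $G_K$ is separable (countably generated): a sheaf over a separable profinite space $T$ whose stalks are separable has a separable free product, by \cite[Chapter 5]{ribes2017profinite}. We should note that the stalks must be separable for this to make sense; this is implicit, since the $G_t$ embed into the separable group $G_F$ in the converse. Second, $\omega$ restricted to each $G_t$ is injective; this is a standard property of free profinite products of sheaves.

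For the converse I would follow the Haran--Jarden approach to finite free products. Let $\varphi:\mathcal{G}\to G_F$ be given. Each $\varphi|_{G_t}$ is injective, so $G_t\cong \varphi(G_t)=G_{F_t}$ for the fixed field $F_t$ of $\varphi(G_t)$ in $F_s$. The plan is to produce a field $K$ of characteristic $\ell$, containing $F$ in a suitable sense, such that $G_K\cong\coprod_T G_t$. I would proceed in three steps.

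\textbf{Step 1: a projective free product.} Let $\hat F$ be the free profinite group on a basis converging to $1$ indexed by $T$ (or countably many generators). Let $P=\coprod_T G_t$ and consider $\hat\varphi:P\to G_F$ induced by $\varphi$. The goal is to find a field extension $E/F$ of characteristic $\ell$ whose absolute Galois group realizes $P$. The standard device is to pass to a rational function field $F(x_1,x_2,\dots)$, or to a field of Laurent-type series. There one uses that the free product of decomposition or inertia groups of a family of valuations which are ``independent'' (in the sense of an approximation theorem) is an absolute Galois group. Concretely, I would construct a family of Henselian valuations $v_t$ on an algebraic extension of $F(x)$, parametrized continuously by $t\in T$, with decomposition groups $\cong G_{F_t}$. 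I would then invoke a local--global principle (as in Haran--Jarden--Pop, ``projective group structures'') to conclude that the absolute Galois group is the free product of the decomposition groups.

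\textbf{Step 2: separability and the sheaf structure.} Separability of $T$ and of $G_F$ is needed so that the field can be built countably. One enumerates a dense countable subset of $T$ and a countable base of open normal subgroups. The construction is then an inductive limit of fields $K_n$, at each stage solving finitely many embedding problems. The Gaschütz-type lemma and the characterization of free products of sheaves via solvability of embedding problems (Mel'nikov's characterization via separability) ensure that the limit group has the universal property of $\coprod_T G_t$.

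\textbf{Main obstacle.} The key difficulty is Step 1, the realization of a free product over an \emph{infinite} profinite index space. This requires the valuations to vary continuously in $t$ so that the decomposition groups form a sheaf isomorphic to $\mathcal{G}$. Here the morphism $\varphi$ is essential, because it provides a single ambient group $G_F$ in which all the $G_t$ sit continuously. I would try first to realize $\mathcal{G}$ as the sheaf of decomposition groups over a Stone space of orderings or valuations of a field $K\supseteq F$, using the continuity of $t\mapsto \varphi(G_t)$ in the strict topology. I would then apply the Haran--Jarden--Pop theorem that fields with a ``projective'' family of valuations have absolute Galois group equal to the free product of the corresponding sheaf.
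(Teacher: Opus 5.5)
\textbf{Verdict: the converse direction has a genuine gap.} Your forward direction matches the paper's, apart from the separability claim discussed at the end of this paragraph: take $\varphi=\omega$, which is injective on stalks. The converse, however, is a research program rather than a proof.
\begin{itemize}
\item The family of Henselian valuations $v_t$ is never constructed.
\item Nothing guarantees that the decomposition groups are isomorphic to the $G_t$ and vary continuously so as to form a sheaf isomorphic to $\mathcal{G}$. You leave exactly this decisive step open (``I would try first to realize $\mathcal{G}$ as the sheaf of decomposition groups\dots'').
\item Even granting that step, the local--global input you cite does not finish the job. Results of Haran--Jarden--Pop type typically yield that $G_K$ is projective \emph{relative} to its decomposition groups. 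That alone gives no free-product decomposition of $G_K$ over an infinite sheaf, let alone one isomorphic to $\coprod_TG_t$.
\item Step~2 only names an inductive embedding-problem construction; it is not carried out.
\end{itemize}
As for separability in the forward direction: in the countable-rank sense, separability of $T$ and of every stalk does \emph{not} make $\coprod_TG_t$ separable, because the rank is governed by the total space $\mathcal{G}$. For instance, there are sheaves over the Cantor set with all stalks $\mathbb{Z}_p$ in which every punctured stalk $G_t\setminus\{1\}$ is open in $\mathcal{G}$. The $2^{\aleph_0}$ morphisms to $\mathbb{Z}/p$ that kill all stalks but one then give $2^{\aleph_0}$ open subgroups of index $p$. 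So separability of $\coprod_TG_t$ has to be assumed, not derived.

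\textbf{The missing idea.} No new field construction is needed. The paper's argument (given for the generalized criterion, and specializing to this one) reduces everything by group theory to the two-factor case of Jarden's problem together with realizability of projective groups.
\begin{itemize}
\item Since $\varphi$ is injective on stalks, $\varphi'=(\varphi,\pi)\colon\mathcal{G}\to G_F\times T$ is an injective continuous map from a compact space. Hence $\mathcal{G}$ is a subsheaf of the constant sheaf $G_F\times T$.
\item By the subsheaf theorem (Mel'nikov's Proposition~2.4), $\coprod_TG_t$ is then a closed subgroup of $\coprod_TG_F$. Closed subgroups of absolute Galois groups are absolute Galois groups of algebraic extensions, hence of the same characteristic.
\item For the constant sheaf, put $G=G_F$ and let $F[T]$ be the free profinite group on the profinite \emph{space} $T$. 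It is projective, hence an absolute Galois group in every characteristic. So $G\amalg F[T]$ is an absolute Galois group in characteristic $\ell$ by the finite case. Only the separable finite case is needed, since $G_F$ and $F[T]$ have countable rank.
\item The normal closure of $G$ in $G\amalg F[T]$ is $\coprod_{h\in F[T]}hGh^{-1}$. It contains $\coprod_{t\in T}tGt^{-1}$ as a closed subgroup because $T$ is closed in $F[T]$.
\item The map $(g,t)\mapsto(tgt^{-1},t)$ is an isomorphism of sheaves from $G\times T$ onto $\bigcup_{t\in T}tGt^{-1}\times\{t\}$.
\end{itemize}
Your $\hat F$ in Step~1 is the right kind of auxiliary object, but with two corrections. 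It must be free on the space $T$ rather than on a basis converging to $1$, because continuity of $t\mapsto t\in F[T]$ is exactly what makes the conjugates $tGt^{-1}$ form a sheaf isomorphic to $G\times T$. And it is used through conjugation inside $G\amalg F[T]$, not as scaffolding for a valuation-theoretic construction.
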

	Here and below $G_F$ stands for the absolute Galois group of $F$.
	
	The original requirement that $T$ and $G_F$ be separable was due to the fact that, at that time, Jarden’s question was only solved for free products of separable profinite groups. Since the class of absolute Galois groups is now known to be closed under free profinite products of finitely many groups, Mel'nikov's criterion can be generalized with no restrictions. This was briefly mentioned in \cite{bar2025restricted} with no further reference. Hence, we shall rephrase Theorem \ref{Mel'nikov's Criterion}:
	\begin{thm}[Generalized Mel'nikov's Criterion]\label{Generalized Mel'nikov's Criterion}
		Let $T$ be a  profinite space and $\mathcal{G}=\bigcup_T G_t$ a sheaf of profinite groups defined over $T$, and let $\ell$ be $0$ or a prime number. Then $\coprod_TG_t$ is an absolute Galois group of a field of characteristic $\ell$ if and only if there exists a morphism $\varphi:\mathcal{G}\to G_F$ for some field $F$ of characteristic $\ell$, such that for every $t\in T$ the restriction of $\varphi$ to $G_t$ is injective.
	\end{thm}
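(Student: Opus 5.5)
The forward direction is immediate. If $\coprod_TG_t\cong G_F$ for a field $F$ of characteristic $\ell$, take $\varphi=\omega:\mathcal{G}\to\coprod_TG_t=G_F$. The only thing to verify is that $\omega$ is injective on each $G_t$. This is a standard fact about free profinite products of sheaves (see \cite[Chapter 5]{ribes2017profinite}). Concretely, for a fixed $t$ one builds a morphism $\mathcal{G}\to G_t$ that restricts to the identity on $G_t$; or, working through finite quotients, one uses the sheaf morphisms onto $G_t/N$ obtained by collapsing the other fibres, and then the universal property forces $\omega|_{G_t}$ to be injective.

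For the converse I would follow Mel'nikov's original argument, replacing his use of separable free products by the now-known closure of absolute Galois groups under finite free products (e.g.\ \cite{haran2000free} in characteristic $\ell$). Let $\varphi:\mathcal{G}\to G_F$ be injective on fibres. The plan is to realize $\coprod_TG_t$ as a closed subgroup of a group of the form $G_F\amalg H$, where $H$ is itself an absolute Galois group of characteristic $\ell$. Closed subgroups of absolute Galois groups are absolute Galois groups of the same characteristic, namely of the corresponding algebraic extension, so this suffices. A natural choice is $H=\widehat{F}_\omega(T)$-type free profinite group, or more simply $H=\coprod_T \hat{\mathbb Z}$-like free group on the space $T$. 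Free profinite groups of any rank are absolute Galois groups in every characteristic, e.g.\ as $G_{K}$ for suitable function fields $K$ over an algebraically closed field of characteristic $\ell$. I would use the free profinite group $\widehat F(T_*)$ on the pointed profinite space $T_*=T\sqcup\{*\}$, generated by elements $x_t$ depending continuously on $t$. By Koenigsmann-type closure, $G_F\amalg \widehat F(T_*)\cong G_{K}$ for some field $K$ of characteristic $\ell$.

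Define a sheaf morphism $\psi:\mathcal{G}\to G_F\amalg\widehat F(T_*)$ by $\psi(g)=x_t\varphi(g)x_t^{-1}$ for $g\in G_t$. Continuity follows from continuity of $t\mapsto x_t$ and of $\varphi$. By the universal property this induces $\bar\psi:\coprod_TG_t\to G_F\amalg\widehat F(T_*)$. The main obstacle is showing that $\bar\psi$ is injective. For this I would invoke the Kurosh-type description of subgroups of free profinite products (\cite[Chapter 5, 9]{ribes2017profinite}). The subgroup generated by the conjugates $x_t\varphi(G_t)x_t^{-1}$ of the factor $G_F$ is the free product over the sheaf of these conjugates, provided the conjugators are ``free enough.'' In the finite-$T$ case, this is the classical statement that $\langle x_i A_i x_i^{-1}\rangle\cong\coprod A_i$ in $G\amalg\widehat F(x_1,\dots,x_n)$ for subgroups $A_i\le G$. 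It can be checked by mapping to finite quotients and using the action on the profinite tree or via explicit finite permutational representations.

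To pass to arbitrary $T$, I would write $\mathcal{G}$ as an inverse limit of sheaves over finite quotient spaces $T_i$ of $T$, compatible with finite quotients of $G_F$. Then $\bar\psi$ is the inverse limit of the finite-case embeddings, and injectivity survives because inverse limits of injective maps of compact groups are injective. The delicate point, which I expect to require the most care, is making this inverse system compatible with the fibrewise-injectivity hypothesis. When fibres are amalgamated over a finite quotient of $T$, the induced maps on the merged factors are no longer injective. I would handle this by working directly with the profinite version of the subgroup theorem for free products over sheaves rather than with a naive limit.
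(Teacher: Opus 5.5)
Your skeleton matches the paper's. You send $g\in G_t$ to $x_t\varphi(g)x_t^{-1}$ in $G_F\amalg\widehat F(T_*)$ (the paper's $G_F\amalg F[T]$), use that free profinite groups on $T$ are realizable in every characteristic together with \cite[Theorem 3.4]{haran2000free}, and pass to a closed subgroup. But the step that carries all the content, injectivity of $\bar\psi$, is not proved. You state the case of finite $T$, and you note yourself that the limit over finite quotients $T_i$ breaks down because merged fibres no longer map injectively into $G_F$. You then defer to an unspecified ``subgroup theorem for free products over sheaves''. As written, this is a genuine gap.

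The missing idea is to factor $\psi$ through the constant sheaf $G_F\times T$, which separates the two things $\psi$ does. Since $\varphi$ is injective on fibres, $(\varphi,\pi)\colon\mathcal{G}\to G_F\times T$ is injective, hence by compactness a homeomorphism onto a closed subsheaf. For a subsheaf, the closed subgroup of $\coprod_TG_F$ generated by the images of the $G_t$ is $\coprod_TG_t$ (proof of \cite[Proposition 2.4]{mel1999free}, \cite[Theorem 5.5.6]{ribes2017profinite}). This is the only place fibrewise injectivity enters, and it removes your amalgamation problem. What remains concerns the constant sheaf alone, in three steps. The normal closure of $G_F$ in $G_F\amalg F[T]$ is $\coprod_{h\in F[T]}hG_Fh^{-1}$ \cite[Lemma 2.3]{mel1999free}. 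Restricting to the closed subspace $T\subseteq F[T]$ yields the closed subgroup $\coprod_{t\in T}tG_Ft^{-1}$ \cite[Lemma 5.5.1]{ribes2017profinite}. Finally, $(g,t)\mapsto(tgt^{-1},t)$ is an isomorphism of sheaves $G_F\times T\to\bigcup_{t\in T}\bigl(tG_Ft^{-1}\times\{t\}\bigr)$. The composite of these embeddings is exactly your $\bar\psi$. (For the constant sheaf your finite-quotient limit argument would also go through, since over every $T_i$ the fibres are still copies of $G_F$.)

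A minor point on the forward direction: a morphism $\mathcal{G}\to G_t$ restricting to the identity on $G_t$ need not exist. For example, take fibres $\mathbb{Z}/p^n$ over $1/n$ accumulating to $G_0=\mathbb{Z}_p$; every morphism to $\mathbb{Z}_p$ kills the finite fibres and hence, by continuity, kills $G_0$. Only your finite-quotient variant is valid, and the paper simply cites \cite[Proposition 5.1.6 (c)]{ribes2017profinite}.
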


		\begin{proof}
			We prove the nontrivial implication; for the converse, note that if $\coprod_TG_t$ is an absolute Galois group of a field $K$ of characteristic $\ell$, then the canonical morphism $\omega:\mathcal{G}\to \coprod_TG_t$ is injective on each $G_t$ (\cite[Proposition 5.1.6 (c)]{ribes2017profinite}) and may serve as $\varphi$. First we recall the concept of a \textit{constant sheaf}. Let $A$ be a profinite group, and $T$ a profinite space. One easily checks that $(A\times T,T,\pi)$, where $\pi$ is the natural projection onto the second component, is a sheaf of profinite groups. We refer to sheaves that are built in this form as \textit{constant sheaves}.
			
			Now look at the map $\varphi':\mathcal{G}\to G_F\times T$ defined by $\varphi'(g)=(\varphi(g),\pi(g))$. Since the restriction of $\varphi$ to each $G_t$ is injective, $\varphi'$ is injective, and as $\mathcal{G}$ is compact it is a homeomorphism onto its image. Hence $\mathcal{G}$ can be identified with a sub-sheaf of $G_F\times T$. As was explained in the proof of \cite[Proposition 2.4]{mel1999free} (see also \cite[Theorem 5.5.6]{ribes2017profinite}), the closed subgroup of the free profinite product $\coprod_TG_F$ which is generated by the images of $G_t$, $t\in T$, is precisely the free profinite product $\coprod_TG_t$. Hence $\coprod_TG_t$ can be identified with a closed subgroup of $\coprod_TG_F$. Recall also that a closed subgroup of $G_K$ is the absolute Galois group of an algebraic extension of $K$, and in particular of a field of the same characteristic. We conclude that it is enough to prove that the free profinite product of a constant sheaf which consists of an absolute Galois group of a field of characteristic $\ell$ can be realized as an absolute Galois group of a field of characteristic $\ell$ as well.
			
			Let $G$ be an absolute Galois group of a field of characteristic $\ell$ and $T$ a profinite space. Consider $F[T]$, the free profinite group over the profinite space $T$ (for the definition see, for example, \cite[Subsection 3.3]{ribes2000profinite}). Recall that $F[T]$ can be realized as an absolute Galois group of a field of any characteristic. Indeed, free profinite groups are trivially projective, and all projective profinite groups can be realized as absolute Galois groups of fields of any characteristic (see, for example, \cite[Proposition 1.3]{mel1999free} and \cite{fried2005field}).
			
			By the affirmative solution to Jarden's question, in the form of \cite[Theorem 3.4]{haran2000free}, $G\coprod F[T]$ can be realized as an absolute Galois group of a field of characteristic $\ell$. By \cite[Lemma 2.3]{mel1999free} the normal closure of $G$ in $G\coprod F[T]$ is isomorphic to $\coprod_{h\in F[T]}G_h$ where $G_h=hGh^{-1}$ for every $h\in F[T]$. In particular $G_h\cong G$. Since $T$ is a closed subset of $F[T]$,  $\coprod_{t\in T}tGt^{-1}\leq \coprod_{h\in F[T]}hGh^{-1}$ (\cite[Lemma 5.5.1]{ribes2017profinite}). Observe that the map from $G\times T$ to $\bigcup_{t\in T}(tGt^{-1}\times \{t\})\leq (G\coprod F[T])\times T$ which is defined by $(g,t)\mapsto (tgt^{-1},t)$ is a continuous bijection between compact Hausdorff spaces, hence a homeomorphism, and thus an isomorphism of sheaves. Hence it induces an isomorphism of the free profinite product $\coprod_TG\cong \coprod_TtGt^{-1}$. Thus, $ \coprod_{T}G$ can be realized as an absolute Galois group of a field of characteristic $\ell$.
		\end{proof}

	The above general equivalent condition has been recently proved also in \cite{haran2025free}. However, in this paper we shall refer to it as \textbf{Mel'nikov Criterion}.
	
	As stated above, the only free pro-$\mathcal{C}$ products that are interesting in Galois theory are the free profinite and free pro-$p$ products. While for free profinite products over infinite sheaves we already have a criterion, the realization of free pro-$p$ products over infinite sheaves has hardly been studied so far (see, however, \cite{bar2025restricted} for the restricted case). In this paper we use the general version of Mel'nikov's Criterion in order to connect the realization of free profinite and free pro-$p$ products of infinite sheaves of pro-$p$ groups. More precisely, the main theorem of the paper is the following:
	\begin{thm}[Main Theorem]
		Let $\mathcal{G}=\bigcup_TG_t$ be a sheaf of pro-$p$ groups. Then $\coprod_TG_t$ is an absolute Galois group of a field of characteristic $\ell$ if and only if $\coprod_T^pG_t$ is an absolute Galois group of a field of characteristic $\ell$.
	\end{thm}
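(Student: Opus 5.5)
The easy direction is from the pro-$p$ product to the profinite one. If $\coprod^p_TG_t\cong G_K$ with $\operatorname{char}K=\ell$, I would take $\varphi$ to be the canonical morphism $\omega^p:\mathcal{G}\to\coprod^p_TG_t=G_K$. By the pro-$\mathcal{C}$ version of \cite[Proposition 5.1.6 (c)]{ribes2017profinite}, $\omega^p$ is injective on every $G_t$. Theorem \ref{Generalized Mel'nikov's Criterion} then gives that $\coprod_TG_t$ is an absolute Galois group of a field of characteristic $\ell$.

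For the converse I would first prove a pro-$p$ analogue of the Generalized Mel'nikov Criterion: if there is a morphism $\varphi:\mathcal{G}\to G_F$ with $G_F$ a pro-$p$ group, $\operatorname{char}F=\ell$, and $\varphi$ injective on each $G_t$, then $\coprod^p_TG_t$ is an absolute Galois group in characteristic $\ell$. The argument copies the proof of Theorem \ref{Generalized Mel'nikov's Criterion} with every construction made pro-$p$:
\begin{enumerate}
\item Embed $\mathcal{G}$ as a subsheaf of the constant sheaf $G_F\times T$.
\item Use the pro-$p$ case of \cite[Theorem 5.5.6]{ribes2017profinite} to identify $\coprod^p_TG_t$ with a closed subgroup of $\coprod^p_TG_F$.
\item Realize $G_F\amalg^p F_p[T]$, where $F_p[T]$ is the free pro-$p$ group on $T$, as an absolute Galois group in characteristic $\ell$. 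This uses Efrat--Haran \cite[Lemma 1.3]{efrat1994galois} together with the fact that free pro-$p$ groups are projective, hence absolute Galois groups in every characteristic.
\item Show that the normal closure of $G_F$ in $G_F\amalg^pF_p[T]$ is $\coprod^p_{h}hG_Fh^{-1}$, the pro-$p$ analogue of \cite[Lemma 2.3]{mel1999free}, proved through the universal property exactly as in the profinite case.
\item Conclude, as before, that $\coprod^p_TG_F\cong\coprod^p_{t\in T}tG_Ft^{-1}$ is a closed subgroup of $G_F\amalg^pF_p[T]$, and hence is an absolute Galois group in characteristic $\ell$.
\end{enumerate}

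It remains to produce such a pro-$p$ target from the hypothesis $\coprod_TG_t\cong G_K$. I would take a $p$-Sylow subgroup $P$ of $G_K$. Then $P=G_L$ for an algebraic extension $L/K$, so $\operatorname{char}L=\ell$. The task is to find a morphism $\mathcal{G}\to P$ that is injective on fibers. Each $\omega(G_t)$ is pro-$p$, so it lies in some conjugate $P^{g_t}$. The difficulty is that $g_t$ cannot obviously be chosen continuously in $t$.

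My first attempt is to avoid choosing conjugators pointwise and use Haran's Kurosh-type theorem for closed subgroups of free products over sheaves. It says that $P=\bigl(\coprod^p_{\mathcal{S}}(P\cap G_t^{g})\bigr)\amalg^p F$ over a suitable sheaf $\mathcal{S}$, with $F$ free pro-$p$. I would then show that, since every $G_t$ is pro-$p$, each $G_t$ occurs (up to conjugacy) as a full fiber of $\mathcal{S}$. Using this, I would build a continuous section $T\to\mathcal{S}$ picking one such fiber over each $t$, possibly after refining $T$ along a profinite cover. If only a cover of $T$ works, I would compose with the natural maps back to $T$ and use that free products over a cover contain the product over $T$, in the spirit of \cite[Lemma 5.5.1]{ribes2017profinite}. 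Either way the result is a morphism $\mathcal{G}\to P$ injective on fibers, and the pro-$p$ criterion above finishes the proof.

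An alternative I would keep in reserve is to work with finite quotients $T\to T_i$ and use compactness to glue consistent local choices of conjugators. This continuity of the conjugators, equivalently of the section into the Kurosh sheaf, is the step I expect to be the main obstacle.
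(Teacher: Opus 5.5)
Your first direction is the paper's own argument. The converse has a genuine gap, and it is exactly the step you call the ``main obstacle''. You never construct a continuous map $\alpha\colon T\to G_K=\coprod_TG_t$ with $H_t^{\alpha(t)}\le P$ for all $t$, where $H_t=\omega(G_t)$. Without it you have no morphism $\mathcal{G}\to P$ that is injective on fibers, so your pro-$p$ Mel'nikov criterion (whose five steps look reasonable) has nothing to be applied to.

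Neither suggestion closes the gap:
\begin{itemize}
\item The Kurosh route needs a precise pro-$p$ decomposition of a Sylow subgroup of a free \emph{profinite} product over an arbitrary sheaf, which you only sketch. Even granting it, picking a continuous section of $\mathcal{S}$ over $T$ through ``full'' fibers is the same continuity problem restated. Since $\{(t,g):g\in H_t\}$ is merely closed, the set of $t$ with $H_t$ inside a prescribed closed subgroup need not be closed, so there is no compactness to exploit.
\item The ``compactness gluing'' idea fails as stated. For $N\trianglelefteq_o G_K$, the admissible locally constant maps $T\to G_K/N$ form an infinite discrete set when $T$ is infinite. So nonemptiness at each finite level does not pass to the limit. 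What is needed is the ability to \emph{lift} a given admissible choice from one level to a finer one.
\end{itemize}

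That lifting is the content of Lemma~\ref{lemma continuous conjugation}, and it is the missing idea. It proceeds in four moves:
\begin{enumerate}
\item For $X$ a union of $N$-cosets, $\{t:H_t\subseteq X\}$ is open; this uses closedness of the graph and compactness.
\item Maximality of $P$ among pro-$p$ subgroups gives $PK\cap PN=P(K\cap N)$.
\item A continuous $\sigma\colon T\to G/K$ with $H_t^{\sigma(t)}\le PK$ lifts to $G/(K\cap N)\cong G/K\times_{G/KN}G/N$. On a clopen partition of $T$ one chooses cosets $x\in G/N$ compatible with $\sigma$ over $G/KN$ and satisfying $H_t^x\le PN$.
\item Zorn's lemma on pairs $(K,\sigma_K)$, using $\bigcap_iPK_i=PK$ along chains, drives $K$ down to $1$.
\end{enumerate}

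Once $\alpha$ exists, your route would work: $x\mapsto\omega(x)^{\alpha(\pi(x))}$ is a fiberwise injective morphism into $P=G_L$. The paper, however, finishes more economically and never needs a pro-$p$ Mel'nikov criterion. Let $\beta\colon G_K\to\coprod^p_TG_t$ be the canonical epimorphism and $s$ a continuous section of $\beta|_P$. The paper replaces $\alpha(t)$ by $c(t)=\alpha(t)\,s(\beta(\alpha(t)))^{-1}\in\ker\beta$. The induced $\gamma'\colon\coprod^p_TG_t\to P$ then satisfies $\beta\circ\gamma'=\mathrm{id}$. This exhibits $\coprod^p_TG_t$ directly as a closed subgroup of $G_K$.
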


	\section{Main Theorem}

Our main tool toward a proof of the main theorem is a natural generalization of \cite[Remark 3.5]{haran2000free}. Before proving it we need the following lemma. In it, for a closed subgroup $H$ of a profinite group $G$ and $a\in G$, we write $H^a=a^{-1}Ha$, and $N\trianglelefteq_o G$ (resp. $K\trianglelefteq_c G$) means that $N$ is an open (resp. closed) normal subgroup of $G$. Moreover, for $K\trianglelefteq_c G$ and $u\in G/K$, the condition $H^u\leq PK$ means that $H^a\leq PK$ for some, or equivalently every, $a\in G$ with $aK=u$.
	\begin{lem}\label{lemma continuous conjugation}
		Let $G$ be a profinite group, let $P$ be a $p$-Sylow subgroup of $G$, let $T$
		be a profinite space, and let $\{H_t\}_{t\in T}$ be a family of closed pro-$p$
		subgroups of $G$ such that the set
		\[
		\Gamma=\{(t,g)\in T\times G : g\in H_t\}
		\]
		is closed in $T\times G$. Then there exists a continuous map
		$\sigma\colon T\to G$ such that $H_t^{\sigma(t)}\le P$ for every $t\in T$.
	\end{lem}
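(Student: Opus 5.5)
Our plan is to construct $\sigma$ by a Zorn's lemma / inverse limit argument over the open (or closed) normal subgroups of $G$, using the notation fixed before the lemma. Consider pairs $(K,\sigma_K)$ where $K\trianglelefteq_c G$ and $\sigma_K\colon T\to G/K$ is continuous with $H_t^{\sigma_K(t)}\le PK$ for all $t\in T$. Order them by $(K,\sigma_K)\ge (L,\sigma_L)$ if $K\le L$ and $\sigma_L$ is the composition of $\sigma_K$ with $G/K\to G/L$. The trivial pair $(G,\ast)$ is admissible. A chain $\{(K_i,\sigma_{K_i})\}$ has an upper bound: put $K=\bigcap K_i$. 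Then $G/K=\varprojlim G/K_i$, so the compatible maps define a continuous $\sigma_K\colon T\to G/K$. For each $t$ choose $a\in G$ over $\sigma_K(t)$. Then $H_t^a\le PK_i$ for all $i$, and $\bigcap_i PK_i=PK$ by compactness. Hence a maximal pair exists, and it suffices to show $K=1$; then $\sigma=\sigma_1$ works.

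So assume $K\neq 1$. Pick $N\trianglelefteq_o G$ with $K\not\le N$ and set $L=K\cap N$, so $K/L$ is a nontrivial finite group. We must lift $\sigma_K$ to a continuous $\sigma_L\colon T\to G/L$ with $H_t^{\sigma_L(t)}\le PL$. Fix $t$ and $a$ over $\sigma_K(t)$. Then $H_t^a$ is a pro-$p$ subgroup of $PK$, and its image in the finite group $PK/L$ is a $p$-subgroup. The group $PL/L$ is a Sylow $p$-subgroup of $PK/L$, since $P$ is Sylow in $PK$. By Sylow's theorem there is $k\in K$ with $H_t^{ak}\le PL$. So a lift exists pointwise, with values in the fibre $\sigma_K(t)K/L$, which is a finite set.

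The main obstacle is making this choice continuous in $t$; this is where closedness of $\Gamma$ enters. First we show that the condition is open in $t$. Suppose $H_t^{b}\le PL$ with $PL$ open modulo... Here $PL$ is closed, not open, so we pass to open data instead. Replace the closed condition by a finite one: work in $G/L$ and note that $PL/L$ is closed in the profinite group $G/L$. Since $L$ is only closed, instead we first choose an open normal $M$ with $M\cap K= L$-type control. More cleanly, we run the whole Zorn argument with the additional reduction that each successor step is "finite": the claim is that the set $U_b=\{t: H_t^{b}\le PLM\}$ is open for $M$ open normal, using that $\Gamma$ closed implies $t\mapsto H_t$ is upper semicontinuous. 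If $H_{t_0}\subseteq bPLMb^{-1}$, an open set, then by compactness of $\Gamma$ a neighbourhood $V$ of $t_0$ has $H_t\subseteq bPLMb^{-1}$ for $t\in V$. Intersecting over $M$ then lets us handle things. The practical route is to take a finite clopen partition $T=V_1\sqcup\dots\sqcup V_r$, with $\sigma_K$ constant modulo a suitable open subgroup on each piece. On each piece we choose a single coset representative $k_i\in K/L$ that works for all $t\in V_i$, refining the partition by the openness above and the finiteness of $K/L$.

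The delicate point is that upper semicontinuity only gives openness of "$H_t^b\le$ open set". To reach "$\le PL$" we intersect the conditions over a basis of $M$'s and use a compactness/finite-intersection argument on the finite fibre $K/L$. For each $t$, the set of good $k\in K/L$ is nonempty. For each $M$, the set of $t$ for which a given $k$ is good modulo $M$ is open. A finite set argument then shows that a given $k$ is good at $t$ if and only if it is good modulo $M$ for all small $M$, and that these sets stabilize locally. This yields a clopen partition on which a constant choice of $k$ works. The result is a continuous lift $\sigma_L$, contradicting maximality. Hence $K=1$.
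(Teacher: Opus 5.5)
Your Zorn framework is the same as the paper's: admissible pairs $(K,\sigma_K)$, chains handled via $G/K=\varprojlim G/K_i$ and $\bigcap_i PK_i=PK$, reduction to lifting from $K$ to $L=K\cap N$ with $N\trianglelefteq_o G$, and pointwise existence by Sylow. The successor step, however, has a genuine gap exactly where you sense the difficulty. Your assertion that ``a finite set argument'' makes the good sets ``stabilize locally'' is unsupported. Finiteness of $K/L$ only gives, for each \emph{fixed} $t$, an open $M_t$ at which goodness of every $k$ is detected. It gives no control at nearby $t'$. Closedness of $\Gamma$ only makes conditions $H_t\subseteq X$ open when $X$ is open, and $bPLb^{-1}$ is not open.

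The missing idea is the identity $PK\cap PN=P(K\cap N)$, which is Step 2 of the paper. Its proof: $(PK\cap PN)/L$ embeds in the pro-$p$ group $PK/K\times PN/N$ and contains the $p$-Sylow subgroup $PL/L$ of $G/L$, so it equals $PL/L$. Now $H_t^b\le PK$ holds automatically for every $b$ with $bK=\sigma_K(t)$. Hence $H_t^b\le PL\iff H_t^b\le PN$. So the stabilization happens uniformly at $M=N$ (note $PLN=PN$). Moreover, the condition depends only on $bN$ and is open in $t$, because $PN$ is open.

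You also never construct the lift itself: choosing ``a single coset representative $k_i\in K/L$ on each piece'' presupposes a base lift of $\sigma_K$ to $G/L$. The paper gets this from the homeomorphism $G/L\cong G/K\times_{G/KN}G/N$, as follows.
\begin{enumerate}
\item For $x=bN\in G/N$, the sets $U_x=\{t:\pi_{K,KN}(\sigma_K(t))=\pi_{N,KN}(x)\}\cap\{t:H_t^b\le PN\}$ form a finite open cover of $T$.
\item A clopen refinement of this cover gives a locally constant $\tau\colon T\to G/N$.
\item Then $\sigma_L=(\sigma_K,\tau)$, read in the fibre product, is continuous and satisfies $H_t^{\sigma_L(t)}\le PK\cap PN=PL$.
\end{enumerate}
A continuous section $G/K\to G$ would also work, but it must still be combined with the identity above.

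One minor slip: $PK/L$ is not finite in general; only $K/L$ is. The pointwise step should use Sylow theory in the profinite group $PK$, together with $PK=KP$.
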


\begin{proof}
	\textit{Step 1:} 	Let $N\trianglelefteq_o G$ and let $X\subseteq G$ be a union of cosets of $N$.
	Then the set $\{t\in T : H_t\subseteq X\}$ is open in $T$.
	
	\textit{Proof:} 	Since $G\setminus X$ is a union of cosets of the open subgroup $N$, it is
	closed in $G$. Hence $\Gamma\cap\bigl(T\times(G\setminus X)\bigr)$ is a closed
	subset of the compact space $T\times G$, and so it is compact. The complement
	of $\{t\in T : H_t\subseteq X\}$ in $T$ is the image of this compact set under
	the projection $T\times G\to T$. Therefore it is compact, and hence closed
	in $T$.
	
	\textit{Step 2:} 	Let $K,N\trianglelefteq_c G$. Then $PK\cap PN=P(K\cap N)$.
	
	\textit{Proof:} 	Put $K'=K\cap N$. Clearly $PK'\le PK\cap PN$. The natural homomorphism
	$G/K'\to G/K\times G/N$ is injective, and it maps $(PK\cap PN)/K'$ into
	$PK/K\times PN/N$, which is a pro-$p$ group. Hence $(PK\cap PN)/K'$ is a
	pro-$p$ subgroup of $G/K'$ containing $PK'/K'$. Since $PK'/K'$ is a $p$-Sylow
	subgroup of $G/K'$, it is a maximal pro-$p$ subgroup of $G/K'$. Thus
	$(PK\cap PN)/K'=PK'/K'$, that is, $PK\cap PN=PK'$. 
	
	\textit{Step 3:} 	Let $K\trianglelefteq_c G$ and $N\trianglelefteq_o G$, and put $K'=K\cap N$.
	Let $\sigma: T\to G/K$ be a continuous map such that
	$H_t^{\sigma(t)}\leq PK$ for every $t\in T$. Then there exists a continuous map
	$\sigma': T\to G/K'$  such that $\sigma=\pi_{K',K}\circ \sigma'$, and
	$H_t^{\sigma'(t)}\le PK'$ for every $t\in T$. Here, and in what follows, for closed normal subgroups $K_1\leq K_2$ of $G$ we denote by $\pi_{K_1,K_2}$ the natural projection $G/K_1\to G/K_2$; in particular $\pi_{K',K}\colon G/K'\to G/K$.
	
	\textit{Proof:} 	Recall that $\pi_{K,KN}\colon G/K\to G/KN$ and $\pi_{N,KN}\colon G/N\to G/KN$ denote the natural
	projections. Note that $G/KN$ is finite.

	First we claim that for every $t\in T$ there exists $a\in G$ with $aK=\sigma(t)$ and
	$H_t^a\leq P$. Indeed, choose $a_0\in G$ with $a_0K=\sigma(t)$. Then
	$H_t^{a_0}$ is a pro-$p$ subgroup of $PK$, and $P$ is a $p$-Sylow subgroup
	of $PK$, being a maximal pro-$p$ subgroup of $G$. By Sylow theory in $PK$
	there is $c\in PK$ with $H_t^{a_0c}\le P$. Since $PK=KP$, we can write
	$c=kq$ with $k\in K$ and $q\in P$. Then
	$H_t^{a_0k}=\bigl(H_t^{a_0c}\bigr)^{q^{-1}}\le P^{q^{-1}}=P$, so $a=a_0k$
	is as required.

	\textit{Step 4:} For $x\in G/N$ choose $b\in G$ with $x=bN$, and set
	\[
	U_x=\{t\in T : \pi_{K,KN}(\sigma(t))=\pi_{N,KN}(x)\}\ \cap\
	\{t\in T : H_t^b\le PN\}.
	\]
	The second condition does not depend on the choice of $b$, since $N\le PN$.
	The first set is clopen in $T$, because $\pi_{K,KN}\circ\sigma$ is continuous and
	$G/KN$ is finite. The second set equals $\{t\in T : H_t\subseteq bPNb^{-1}\}$,
	and $bPNb^{-1}=bPb^{-1}N$ is a union of cosets of $N$; hence it is open by
	Step 1. Thus each $U_x$ is open in $T$. Moreover, if $t\in T$
	and $a$ is as in Step 3, then $\pi_{K,KN}(\sigma(t))=aKN=\pi_{N,KN}(aN)$ and
	$H_t^a\leq P\leq PN$, so $t\in U_{aN}$. Hence $\{U_x\}_{x\in G/N}$ is a finite
	open cover of $T$.

	Since $T$ is a profinite space, there exist a partition
	$T=C_1\cup\dots\cup C_m$ of $T$ into pairwise disjoint clopen sets and
	elements $x_1,\dots,x_m\in G/N$ such that $C_i\subseteq U_{x_i}$ for every $i$.
	Define $\tau: T\to G/N$ by $\tau(C_i)=\{x_i\}$. Then $\tau$ is
	continuous and $t\in U_{\tau(t)}$ for every $t\in T$. Consider the natural map
	\[
	\theta\colon G/K'\longrightarrow G/K\times_{G/KN}G/N
	=\{(u,v)\in G/K\times G/N : \pi_{K,KN}(u)=\pi_{N,KN}(v)\}.
	\]
	It is injective because $K'=K\cap N$. It is surjective: if
	$\pi_{K,KN}(aK)=\pi_{N,KN}(bN)$, then $b=akn$ for some $k\in K$ and $n\in N$, and
	$\theta(akK')=(aK,bN)$. Being a continuous bijection between compact Hausdorff
	spaces, $\theta$ is a homeomorphism. Since $\pi_{K,KN}(\sigma(t))=\pi_{N,KN}(\tau(t))$
	for every $t\in T$, the map
	$\sigma'=\theta^{-1}\circ(\sigma,\tau)\colon T\to G/K'$ is well defined and
	continuous, and it lifts $\sigma$.
	
	Finally, let $t\in T$ and let $b\in G$ with $bK'=\sigma'(t)$. Then
	$bK=\sigma(t)$, so $H_t^b\le PK$. Also $bN=\tau(t)$ and
	$t\in U_{\tau(t)}$, so $H_t^b\le PN$. By Step 2
	$H_t^b\le PK\cap PN=PK'$.
	
	\textit{Final step:} We use Zorn's Lemma to prove the lemma.
	
		Let $\Omega$ be the set of all pairs $(K,\sigma_K)$, where
	$K\trianglelefteq_c G$ and $\sigma_K\colon T\to G/K$ is a continuous map
	such that $H_t^{\sigma_K(t)}\le PK$ for every $t\in T$. Order $\Omega$ by
	setting $(K_1,\sigma_1)\preceq(K_2,\sigma_2)$ if $K_2\le K_1$ and $\sigma_1=\pi_{K_2,K_1}\circ \sigma_2$, where, as above, $\pi_{K_2,K_1}\colon G/K_2\to G/K_1$ is the natural projection.
 The pair consisting of $G$ and the constant map
	$T\to G/G$ belongs to $\Omega$, so $\Omega\neq\emptyset$.
	
	Let $\{(K_i,\sigma_i)\}_{i\in I}$ be a nonempty chain in $\Omega$ and put
	$K=\bigcap_{i\in I}K_i$. Since the subgroups $K_i$ form a chain,
	$G/K=\varprojlim_{i\in I}G/K_i$, and the compatible maps $\sigma_i$ define a
	continuous map $\sigma\colon T\to G/K$. Let $t\in T$ and $a\in G$ with
	$aK=\sigma(t)$. Then $aK_i=\sigma_i(t)$, and hence $H_t^a\le PK_i$, for every
	$i\in I$. Observe that $\bigcap_{i\in I}PK_i=PK$ (see \cite[Proposition 2.1.4]{ribes2000profinite}).
Consequently $H_t^a\le PK$, and $(K,\sigma)\in\Omega$ is an
	upper bound for the chain.
	
	By Zorn's lemma, $\Omega$ has a maximal element $(K,\sigma_K)$. Suppose that
	$K\neq 1$. Since the open normal subgroups of $G$ intersect trivially, there
	is $N\trianglelefteq_o G$ with $K\not\le N$. By Step 3
	there is a continuous map $\sigma'\colon T\to G/(K\cap N)$ such that
	$(K\cap N,\sigma')\in\Omega$ and $(K,\sigma_K)\preceq(K\cap N,\sigma')$. As
	$K\cap N\neq K$, this contradicts the maximality of $(K,\sigma_K)$. Therefore
	$K=1$, and $\sigma=\sigma_K\colon T\to G$ is as required.
\end{proof}	
 \begin{cor}\label{existenes of alpha}
 	Let $(\mathcal{G},T,\pi)$ be a sheaf of pro-$p$ groups and $G$ be its free profinite product. Let $P$ be a $p$-Sylow subgroup of $G$. Then there exists a continuous map $\alpha:T\to G$ such that $H_t^{\alpha(t)}\leq P$ for every $t\in T$, where $H_t$ denotes the image of $G_t$ in $G$ under the canonical morphism $\omega:\mathcal{G}\to G$. 
 \end{cor}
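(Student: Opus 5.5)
The plan is to apply Lemma \ref{lemma continuous conjugation} directly, with $G=\coprod\mathcal{G}$, the given $p$-Sylow subgroup $P$, and $H_t=\omega(G_t)$. Two hypotheses need checking: that each $H_t$ is a closed pro-$p$ subgroup of $G$, and that the set
\[
\Gamma=\{(t,g)\in T\times G : g\in H_t\}
\]
is closed in $T\times G$. Once both are verified, the lemma produces a continuous $\sigma\colon T\to G$ with $H_t^{\sigma(t)}\le P$ for every $t$, and we take $\alpha=\sigma$.

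The first hypothesis is immediate. The restriction $\omega|_{G_t}\colon G_t\to G$ is a continuous homomorphism from a compact group, so its image $H_t$ is a closed subgroup. It is also a continuous image of the pro-$p$ group $G_t$, hence pro-$p$. (Injectivity of $\omega$ on $G_t$, which holds by \cite[Proposition 5.1.6 (c)]{ribes2017profinite}, is not even needed here.)

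For the second hypothesis, consider the continuous map
\[
\Phi\colon\mathcal{G}\to T\times G,\qquad \Phi(x)=(\pi(x),\omega(x)).
\]
Both coordinates are continuous: $\pi$ by definition of a sheaf, and $\omega$ because a morphism of sheaves is continuous. The image of $\Phi$ is exactly $\Gamma$. Indeed, $(t,g)\in\Gamma$ if and only if $g=\omega(x)$ for some $x\in G_t=\pi^{-1}(t)$, which holds if and only if $(t,g)=\Phi(x)$. Since $\mathcal{G}$ is a profinite space, it is compact, so $\Gamma=\Phi(\mathcal{G})$ is compact in the Hausdorff space $T\times G$ and therefore closed.

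No step here is a real obstacle: the argument is a direct check of the lemma's hypotheses. The only point needing care is the identification $\Phi(\mathcal{G})=\Gamma$, which relies on $H_t$ being defined as $\omega(G_t)$ and on the fibers of $\pi$ being exactly the groups $G_t$.
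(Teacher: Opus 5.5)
Your proposal is correct and follows the paper's proof exactly: it applies Lemma~\ref{lemma continuous conjugation} with $H_t=\omega(G_t)$, which is a closed pro-$p$ subgroup. Closedness of $\Gamma$ comes from realizing it as the image of the compact space $\mathcal{G}$ under the continuous map $(\pi,\omega)\colon\mathcal{G}\to T\times G$.
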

\begin{proof}
	By Lemma \ref{lemma continuous conjugation}, the only thing we need to show is that $\Gamma=\{(t,g):g\in H_t\}$ is closed in $T\times G$. Indeed, $H_t=\omega(G_t)$ is a closed pro-$p$ subgroup of $G$, and $\Gamma$ is the image of the compact space $\mathcal{G}$ under the continuous map $(\pi,\omega):\mathcal{G}\to T\times G$, hence it is closed.
\end{proof}

	Now we can prove the required generalization of \cite[Remark 3.5]{haran2000free}.
	\begin{prop}\label{pro-p versus profinite}
		Let $\mathcal{G}$ be a sheaf of pro-$p$ groups. Then there is an embedding of $\coprod_T^pG_t$ into $\coprod_TG_t$.
	\end{prop}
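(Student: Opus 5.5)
Write $G=\coprod_TG_t$ with canonical morphism $\omega:\mathcal{G}\to G$, and fix a $p$-Sylow subgroup $P$ of $G$. By Corollary \ref{existenes of alpha} there is a continuous map $\alpha:T\to G$ with $H_t^{\alpha(t)}\le P$ for all $t\in T$, where $H_t=\omega(G_t)$.

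\textbf{Step 1: a morphism into $P$.} Define $\beta:\mathcal{G}\to P$ by
\[
\beta(x)=\alpha(\pi(x))^{-1}\,\omega(x)\,\alpha(\pi(x)).
\]
It is continuous because $\alpha\circ\pi$, $\omega$ and multiplication in $G$ are continuous. On each $G_t$ it is $\omega|_{G_t}$ followed by conjugation by $\alpha(t)$, hence a homomorphism. It is injective on $G_t$, since $\omega|_{G_t}$ is injective by \cite[Proposition 5.1.6 (c)]{ribes2017profinite}. So $\beta$ is a morphism of sheaves into the pro-$p$ group $P$. By the universal property of the free pro-$p$ product it induces a continuous homomorphism
\[
\bar\beta:\coprod_T^pG_t\to P\le G.
\]

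\textbf{Step 2: a splitting.} Let $\omega^p:\mathcal{G}\to\coprod^p_TG_t$ be the canonical morphism. By the universal property of $G$ there is a homomorphism $\rho:G\to\coprod_T^pG_t$ with $\rho\circ\omega=\omega^p$. Since $\rho$ maps into a pro-$p$ group, its restriction $\rho|_P$ is the candidate left inverse of $\bar\beta$. For $x\in G_t$ we compute
\[
\rho(\bar\beta(\omega^p(x)))=\rho(\alpha(t))^{-1}\,\omega^p(x)\,\rho(\alpha(t)).
\]
So $\rho\circ\bar\beta$ agrees on generators with the morphism $x\mapsto\gamma(\pi(x))^{-1}\omega^p(x)\gamma(\pi(x))$, where $\gamma=\rho\circ\alpha:T\to\coprod^p_TG_t$ is continuous. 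This is not the identity.

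\textbf{Step 3: removing the twist (main obstacle).} The twisting by $\gamma(t)$ is the difficulty. One cannot simply conclude that $\rho\circ\bar\beta$ is an automorphism, since a homomorphism that is conjugation by a different element on each factor need not be injective in general.

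The first thing to try is the standard fact for free pro-$\mathcal{C}$ products over sheaves: given a continuous $\gamma:T\to\coprod^p_TG_t$, the family $\{G_t^{\gamma(t)}\}$ is again a sheaf. If it generates its free product freely, then the map $\omega^p(x)\mapsto\omega^p(x)^{\gamma(\pi(x))}$ extends to an automorphism. This is the profinite analogue of the Kurosh-type statement that ``conjugating factors by a continuous family preserves the free product''. It should be provable by an inverse-limit argument over finite quotients, reducing to finite $T$. There the claim for free pro-$p$ products of finitely many factors is classical: conjugated factors still form a free product basis, since the induced map is surjective modulo the Frattini subgroup, and a surjective endomorphism of a finitely generated pro-$p$ group is an automorphism.

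\textbf{Fallback.} If this does not work directly, I would avoid the twist altogether. Instead, show that $\bar\beta(\coprod^p_TG_t)$ is the closed subgroup of $G$ generated by the sheaf $\{H_t^{\alpha(t)}\}$. Then apply the subgroup theorem for free profinite products (\cite[Theorem 5.5.6]{ribes2017profinite}) to the pro-$p$ subgroup generated by these conjugates, to identify it with their free pro-$p$ product. Either route shows that $\bar\beta$ is injective, giving the desired embedding.
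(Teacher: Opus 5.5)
There is a genuine gap: your Steps 1 and 2 reproduce the paper's setup, but nothing in the proposal proves that $\bar\beta$ is injective. Step 3 asserts that the endomorphism $\theta=\rho\circ\bar\beta$ of $F=\coprod^p_TG_t$, given by $\omega^p(x)\mapsto\omega^p(x)^{\gamma(\pi(x))}$, is an automorphism, and supports this only by a sketched reduction to finite $T$. That reduction does not go through as stated. The twisting map $\gamma\colon T\to F$ is not compatible with an inverse system $F=\varprojlim B_i$ of finite free pro-$p$ products. Its image in the infinite group $B_i$ need not factor through the finite set $T_i$, so there is no twisted endomorphism of $B_i$ to which the Hopfian property could be applied. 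Without a finite $T$ and finitely generated $G_t$, the Hopfian argument is unavailable. Your fallback is circular. \cite[Theorem 5.5.6]{ribes2017profinite} concerns subsheaves of the given factors, not conjugates $H_t^{\alpha(t)}$ by a varying family, and it yields free profinite rather than free pro-$p$ products. Identifying $\overline{\langle H_t^{\alpha(t)}\rangle}$ with $\coprod^p_TG_t$ is exactly the statement to be proved.

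The paper removes the twist at the source. Since $\rho(P)$ is a $p$-Sylow subgroup of the pro-$p$ group $\coprod^p_TG_t$, the restriction $\rho|_P$ is onto, so it has a continuous section $s$. Replace $\alpha$ by $c(t)=\alpha(t)\,s(\rho(\alpha(t)))^{-1}$. Because $s(\cdot)\in P$, conjugation by $c(t)$ still carries $H_t$ into $P$, while $\rho(c(t))=1$. Your $\gamma$ then becomes trivial, and $\rho\circ\bar\beta=\mathrm{id}$ by uniqueness in the universal property.

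Your own route can also be repaired, but by an argument not in your proposal. First, $\theta(y)y^{-1}\in\Phi(F)$ for all $y\in F$: it holds on the generators $\omega^p(\mathcal{G})$, and the set of such $y$ is a closed subgroup. Next, induct along the lower $p$-central series $P_n(F)$, using $[P_n,P_2]\subseteq P_{n+2}$. This gives $\theta(g)g^{-1}\in P_{n+1}(F)$ for every $g\in P_n(F)$. Since $\bigcap_nP_n(F)=1$, it follows that $\ker\theta=1$.
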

	\begin{proof}
	Let $G=\coprod_TG_t$, let $P$ be a $p$-Sylow subgroup of $G$, and let $\alpha:T\to G$ be as in Corollary \ref{existenes of alpha}. Recall that by \cite[Theorem 5.6.1]{ribes2017profinite} $\coprod_T^pG_t$ is the maximal pro-$p$ quotient of $\coprod_TG_t$. In particular there exists an epimorphism $\beta:\coprod_TG_t\to \coprod_T^pG_t$, which satisfies $\beta\circ\omega=\omega_p$, where $\omega:\mathcal{G}\to \coprod_TG_t$ and $\omega_p:\mathcal{G}\to \coprod_T^pG_t$ are the canonical morphisms. Since $\beta$ is an epimorphism, $\beta(P)$ is a $p$-Sylow subgroup of $\coprod_T^pG_t$. However, $\coprod_T^pG_t$ is a pro-$p$ group, thus the restriction of $\beta$ to $P$, which we again denote by $\beta$, is an epimorphism as well. By \cite[Proposition 2.2.2]{ribes2000profinite} there exists a continuous set-theoretic section $\sigma:\coprod_T^pG_t\to P$ of $\beta$. For $t\in T$ put $c(t)=\alpha(t)\sigma(\beta(\alpha(t)))^{-1}$, so that $c:T\to G$ is continuous and $\beta(c(t))=1$. Eventually, define $\gamma:\mathcal{G}\to P$ by $\gamma(x)=\omega(x)^{c(\pi(x))}$; note that $\gamma$ indeed takes its values in $P$, since $\omega(x)^{\alpha(t)}\in P$ for every $x\in G_t$ and $\sigma(\beta(\alpha(t)))\in P$. This is a continuous map which is a homomorphism on each $G_t$ and hence induces a homomorphism $\gamma':\coprod_T^pG_t\to P$ such that $\gamma'\circ\omega_p=\gamma$. Eventually, for all $t\in T$ and $x\in G_t$, $\beta(\gamma(x))=\omega_p(x)^{\beta(c(t))}=\omega_p(x)$. One concludes by the uniqueness property of free pro-$p$ products that $\beta\circ\gamma'=id_{\coprod_T^pG_t}$ and hence $\gamma'$ is injective.
	\end{proof}
	Now we can easily deduce the main theorem.
	\begin{thm}
		Let $\mathcal{G}=\bigcup_TG_t$ be a sheaf of pro-$p$ groups. Then $\coprod_TG_t$ is an absolute Galois group of a field of characteristic $\ell$ if and only if $\coprod_T^pG_t$ is an absolute Galois group of a field of characteristic $\ell$.
	\end{thm}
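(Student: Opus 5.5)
The proof splits into the two implications. In both, closed subgroups of absolute Galois groups are again absolute Galois groups of fields of the same characteristic, because they are Galois groups of algebraic extensions. The Generalized Mel'nikov Criterion (Theorem \ref{Generalized Mel'nikov's Criterion}) then reduces everything to producing a suitable morphism that is injective on every fiber.

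($\Rightarrow$) Suppose $\coprod_TG_t\cong G_K$ with $\operatorname{char}K=\ell$. By Proposition \ref{pro-p versus profinite}, $\coprod^p_TG_t$ embeds as a closed subgroup of $\coprod_TG_t\cong G_K$. Indeed, the image of $\gamma'$ is compact, hence closed. So $\coprod^p_TG_t$ is the absolute Galois group of an algebraic extension of $K$, which again has characteristic $\ell$.

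($\Leftarrow$) Suppose $\coprod^p_TG_t\cong G_F$ with $\operatorname{char}F=\ell$. Take $\varphi=\omega_p\colon\mathcal{G}\to\coprod^p_TG_t\cong G_F$, the canonical morphism into the free pro-$p$ product. The Generalized Mel'nikov Criterion will then give that $\coprod_TG_t$ is an absolute Galois group of a field of characteristic $\ell$, provided $\omega_p$ is injective on each $G_t$.

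The only point needing care, and the main obstacle, is this fiberwise injectivity. I would argue it exactly as the paper does for the profinite case: cite \cite[Proposition 5.1.6 (c)]{ribes2017profinite}, which applies to free pro-$\mathcal{C}$ products in general, in particular with $\mathcal{C}$ the finite $p$-groups. If a direct argument is preferred, for fixed $t_0$ define a morphism $\mathcal{G}\to G_{t_0}$ that is the identity on $G_{t_0}$ and trivial on the other fibers. This requires continuity, so one would instead work with finite quotients: for an open normal $U$ of $G_{t_0}$, extend to a morphism to $G_{t_0}/U$ using the sheaf structure over a clopen neighborhood of $t_0$, as in the standard construction. Then $\omega_p$ is injective on $G_{t_0}$ since these morphisms factor through it. Citing the reference suffices.
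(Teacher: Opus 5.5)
Your proposal is correct and follows the paper's proof essentially verbatim. One direction feeds the canonical morphism $\omega_p\colon\mathcal{G}\to\coprod^p_TG_t$, which is injective on fibers by \cite[Proposition 5.1.6 (c)]{ribes2017profinite}, into the Generalized Mel'nikov Criterion; the other uses Proposition \ref{pro-p versus profinite} to realize $\coprod^p_TG_t$ as a closed subgroup of $G_K$, hence as the absolute Galois group of an algebraic extension of $K$.
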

	\begin{proof}
		Assume first that $\coprod_T^pG_t$ is an absolute Galois group of a field of characteristic $\ell$. Since the natural map $\mathcal{G}\to \coprod_T^pG_t$ is injective on each fiber $G_t$ (\cite[Proposition 5.1.6 (c)]{ribes2017profinite}), Theorem \ref{Generalized Mel'nikov's Criterion} implies that so is $\coprod_TG_t$. Conversely, assume that $\coprod_TG_t$ is an absolute Galois group of a field $K$ of characteristic $\ell$. By Proposition \ref{pro-p versus profinite}, $\coprod_T^pG_t$ is isomorphic to a closed subgroup of $\coprod_TG_t$, and hence it is the absolute Galois group of an algebraic extension of $K$, which is again of characteristic $\ell$.
	\end{proof}
	
As we are discussing the connection between free profinite and free pro-$p$ products of sheaves of pro-$p$ groups, we take this opportunity to prove another connection, between free pro-$\mathcal{C}$ and free pro-$\mathcal{C}'$ products of the same sheaf. For that
we give a slight modification of \cite[Theorem 5.3.4]{ribes2017profinite}. 
\begin{lem}\label{preserving the variety}
	Let $\mathcal{C}\subseteq \mathcal{C}'$ be two extension-closed varieties of finite groups. Let $\mathcal{G}$ be a sheaf of pro-$\mathcal{C}$ groups and let $G=\coprod^{\mathcal{C}'}_TG_t$ be its free pro-$\mathcal{C}'$ product. Then there exists a directed partially ordered set $I$, and two inverse systems over $I$, one $(B_i,\psi_{ij},I)$ of pro-$\mathcal{C}'$ groups and the other $(T_i,\rho_{ij},I)$ of finite discrete spaces such that:
	\begin{enumerate}
		\item $T=\varprojlim_IT_i$.
		\item For each $i\in I$, $B_i=\coprod_{\tau\in T_i}^{\mathcal{C}'}B_{i,\tau}$ where each $B_{i,\tau}$ is a group in $\mathcal{C}$.
		\item $\psi_{ij}(B_{i,\tau})\leq B_{j,\tau'}$ whenever $i\geq j$ and $\rho_{ij}(\tau)=\tau'$.
		\item For every $t=(\tau_i)\in \varprojlim_IT_i=T$, $G_t=\varprojlim_{i\in I}B_{i,\tau_i}$. 
		\item $G=\varprojlim_{i\in I}B_i$.
	\end{enumerate}
\end{lem}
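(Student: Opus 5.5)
We plan to follow the proof of \cite[Theorem 5.3.4]{ribes2017profinite} and keep track of the extra information that the fibers are pro-$\mathcal{C}$. Since $\mathcal{G}$ is a profinite space and each fiber is a pro-$\mathcal{C}$ group, a standard argument (\cite[Section 5.1--5.2]{ribes2017profinite}) gives a family of ``sheaf-open normal subgroups'': pairs $(R,\rho)$ where $\rho:T\to T_R$ is a continuous surjection onto a finite discrete space and $R\subseteq\mathcal{G}$ is an open subset such that $R\cap G_t$ is an open normal subgroup of $G_t$ for all $t$, and $R$ is compatible with $\rho$. In the quotient sheaf $\mathcal{G}/R$ the fibers over $\rho^{-1}(\tau)$ are all identified with one finite group $B_{R,\tau}$. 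More precisely, we get a morphism of sheaves $\mathcal{G}\to\bigcup_{\tau\in T_R}B_{R,\tau}$ over $\rho$ whose fiberwise kernels are $R\cap G_t$. Because $G_t$ is pro-$\mathcal{C}$ and $G_t/(R\cap G_t)$ maps onto (indeed is) the relevant finite quotient, we may take $B_{R,\tau}$ to be a finite quotient of some $G_t$, hence in $\mathcal{C}$. This is the only point where the pro-$\mathcal{C}$ hypothesis enters, and it gives (2).

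Let $I$ be the set of such pairs, ordered by refinement: $R'\subseteq R$ and $\rho$ factors through $\rho'$. The main technical step, and the one we expect to be the obstacle, is showing that $I$ is directed and that the sets $R$ form a fundamental system of neighborhoods of the identity section. Concretely, the intersection of the $R$'s should be the unit section $\{1_t\}$, and the partitions $\rho^{-1}(\tau)$ should separate points of $T$. For directedness, given $(R_1,\rho_1)$ and $(R_2,\rho_2)$, we would take a common refinement of the clopen partitions of $T$ and set $R=R_1\cap R_2$. We then have to check that on each piece of the refined partition the quotients are again uniform, i.e.\ constant. If uniformity fails, we refine the partition further. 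The tool for this is the continuity of $(x,y)\mapsto xy^{-1}$ together with compactness: an open normal sub-sheaf is locally constant, which gives a clopen partition on whose pieces the quotient is constant. This yields (1), with $T=\varprojlim T_i$ because the partitions separate points.

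Next we set $B_i=\coprod^{\mathcal{C}'}_{\tau\in T_i}B_{i,\tau}$, a free pro-$\mathcal{C}'$ product of finitely many groups in $\mathcal{C}\subseteq\mathcal{C}'$. For $i\ge j$, the canonical maps $B_{i,\tau}\to B_{j,\rho_{ij}(\tau)}$ induce $\psi_{ij}$ by the universal property, which gives (3). Item (4) follows because the fiberwise kernels $R\cap G_t$ form a basis of open normal subgroups of $G_t$, so $G_t=\varprojlim B_{i,\tau_i}$.

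For (5), the composite maps $\mathcal{G}\to\bigcup_\tau B_{i,\tau}\to B_i$ are morphisms into pro-$\mathcal{C}'$ groups. They therefore induce compatible homomorphisms $G\to B_i$, and hence a map $G\to\varprojlim B_i$. To show this map is an isomorphism, we verify that $\varprojlim B_i$ together with the limit morphism $\mathcal{G}\to\varprojlim B_i$ satisfies the universal property of $\coprod^{\mathcal{C}'}\mathcal{G}$. Let $\beta:\mathcal{G}\to A$ be a morphism to a finite group $A\in\mathcal{C}'$. By uniform continuity, $\beta$ is constant on the cosets of some $R$ over the pieces of some $\rho$, so it factors through $\bigcup_\tau B_{i,\tau}$ for some $i$, and hence through $B_i$. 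Uniqueness of the induced map follows because $\varprojlim B_i$ is generated by the image of $\mathcal{G}$: each $B_i$ is generated by the $B_{i,\tau}$, and these are the images of $\mathcal{G}$. Passing to inverse limits over finite $A$ then handles arbitrary pro-$\mathcal{C}'$ targets, and this proves (5).
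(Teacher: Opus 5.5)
Your construction rests on the claim that, for a ``sheaf-open normal subgroup'' $R$, the quotients $G_t/(R\cap G_t)$ are locally constant in $t$. In other words, over each piece $\rho^{-1}(\tau)$ every fiber $G_t$ maps \emph{onto} a single finite group $B_{R,\tau}$. This fails in general, because $\pi$ need not be an open map.

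Take $T=\{0\}\cup\{1/n : n\geq 1\}$, $C_2=\{1,a\}$ and $\mathcal{G}=(\{1\}\times T)\cup\{(a,0)\}\subseteq C_2\times T$. This is a sheaf of pro-$2$ groups with $G_0=C_2$ and $G_{1/n}=1$. The set $R=\{1\}\times T$ is open, and $R\cap G_t$ is open normal in $G_t$ for every $t$, yet the quotients jump at $0$. No refinement of the partition helps. Any system satisfying (4) must attach a group containing a copy of $C_2$ to some clopen piece around $0$, and that piece contains points $1/n$ whose trivial fibers cannot map onto it.

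So in general the maps $G_t\to B_{i,\tau}$ are injective modulo their kernels but not surjective. Every place where you use surjectivity then breaks down:
\begin{itemize}
\item You can no longer identify $B_{R,\tau}$ with ``the'' quotient, which was your argument for (2).
\item Item (4) does not follow from the kernels forming a basis, since $\varprojlim_i B_{i,\tau_i}$ could a priori be strictly larger than $G_t$.
\item Uniqueness in (5) used that the $B_{i,\tau}$ are images of $\mathcal{G}$, which no longer holds.
\end{itemize}

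The direct route can be repaired as follows.
\begin{itemize}
\item Attach to a piece around $t_0$ the image of $G_{t_0}$ in a finite quotient; this group lies in $\mathcal{C}$.
\item Extend this map to a clopen neighbourhood of $t_0$ by a compactness argument using continuity of $(x,y)\mapsto xy^{-1}$. Nearby fibers then map \emph{into} this group, not onto it.
\item Get directedness by taking images of single fibers in products.
\item Obtain (4), and the generation statement needed for uniqueness, by a Mittag-Leffler argument. Use refinements in which the given $t$ is the chosen centre of its piece.
\end{itemize}

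The paper sidesteps all of this. It takes the finite-sheaf system $(A_{i,\tau},T_i)$ from \cite[Theorem 5.3.4]{ribes2017profinite} applied with $\mathcal{C}$. That system already provides $A_{i,\tau}\in\mathcal{C}$ together with items (1), (3) and (4). The paper then replaces each $\coprod^{\mathcal{C}}_{\tau}A_{i,\tau}$ by $\coprod^{\mathcal{C}'}_{\tau}A_{i,\tau}$ and defines $\psi_{ij}$ by the universal property. Finally, it obtains (5) from \cite[Proposition 5.3.2]{ribes2017profinite}.
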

\begin{proof}
	Let $H=\coprod_T^{\mathcal{C}}G_t$ be the free pro-$\mathcal{C}$ product of $\mathcal{G}$. Use \cite[Theorem 5.3.4]{ribes2017profinite} to express $H$ as an inverse limit of free pro-$\mathcal{C}$ products of finitely many groups in $\mathcal{C}$ as below:
	There exists a directed partially ordered set $I$, and two inverse systems over $I$, one $(A_i,\varphi_{ij},I)$ of pro-$\mathcal{C}$ groups and the other $(T_i,\rho_{ij},I)$ of finite discrete spaces such that:
	\begin{enumerate}
		\item $T=\varprojlim_IT_i$.
		\item For each $i\in I$, $A_i=\coprod_{\tau\in T_i}^{\mathcal{C}}A_{i,\tau}$ where each $A_{i,\tau}$ is a group in $\mathcal{C}$.
		\item $\varphi_{ij}(A_{i,\tau})\leq A_{j,\tau'}$ whenever $i\geq j$ and $\rho_{ij}(\tau)=\tau'$.
		\item For every $t=(\tau_i)\in \varprojlim_IT_i=T$, $G_t=\varprojlim_{i\in I}A_{i,\tau_i}$. 
		\item $H=\varprojlim_{i\in I}A_i$.
	\end{enumerate}

	Now define $B_{i,\tau}=A_{i,\tau}$ for every $i\in I,\tau\in T_i$ and $B_i= \coprod_{\tau\in T_i}^{\mathcal{C}'}A_{i,\tau}$. We need to define maps $\psi_{ij}:B_i\to B_j$ for all $i\geq j$ in $I$. We do it as follows: Let $\tau\in T_i$ and $\tau'=\rho_{ij}(\tau)$. By assumption there are homomorphisms $A_{i,\tau}\to A_{j,\tau'}$ and hence there are homomorphisms $A_{i,\tau}\to B_j$. Since $B_j$ is a pro-$\mathcal{C}'$ group, by the universal property of the free pro-$\mathcal{C}'$ product $B_i$ these homomorphisms can be extended uniquely to a homomorphism $\psi_{ij}:B_i\to B_j$. From the uniqueness one deduces that the homomorphisms $\psi_{ij}$ are compatible. Hence we have an inverse system over $I$, $(B_i,\psi_{ij},I)$. By \cite[Proposition 5.3.2]{ribes2017profinite}, $\varprojlim_{i\in I}B_i\cong \coprod^{\mathcal{C}'}_TG_t$ and we are done.
\end{proof}

	\section*{Acknowledgements}
	The proof of Lemma~\ref{lemma continuous conjugation} was carried out by Claude, an AI assistant developed by Anthropic, in response to a question of the author (September 2026). Claude also assisted in checking the correctness of the arguments and in rewriting parts of the paper for readability.
	
	\bibliographystyle{plain}

\end{document}